\font\sc=rsfs10 at 12pt
\numberwithin{equation}{section}
\renewcommand{\a}{\alpha}
\newcommand{\g}{\gamma}
\renewcommand{\d}{\delta}
\newcommand{\e}{\epsilon}
\newcommand{\ve}{\varepsilon}
\newcommand{\z}{\zeta}
\renewcommand{\k}{\kappa}
\newcommand{\m}{\mu}
\newcommand{\n}{\nu}
\newcommand{\f}{\phi}
\renewcommand{\o}{\omega}
\newcommand{\C}{{\mathbb C}}
\newcommand{\R}{{\mathbb R}}
\newcommand{\bbp}{\boldsymbol\pi}
\newcommand{\ab}{{\mathbf a}}
\newcommand{\rb}{{\mathbf r}}
\newcommand{\vb}{{\mathbf v}}
\newcommand{\Ab}{{\mathbf A}}
\newcommand{\Bb}{{\mathbf B}}
\newcommand{\Pb}{{\mathbf P}}
\newcommand{\Qb}{{\mathbf Q}}
\newcommand{\Tb}{{\mathbf T}}
\newcommand{\Ac}{{\mathcal A}}
\newcommand{\Dc}{{\mathcal D}}
\newcommand{\tE}{\sc\mbox{E}\hspace{1.0pt}} %%
\newcommand{\tB}{\sc\mbox{B}\hspace{1.0pt}} %%
\newcommand{\supp}{\hbox{{\rm supp}}\,}
\newcommand{\var}{\hbox{{\rm var}}\,}
\DeclareMathOperator{\im}{{\rm Im}\,}
\DeclareMathOperator{\re}{{\rm Re}\,}
\newtheorem{theorem}{Theorem}[section]
\newtheorem{lemma}[theorem]{Lemma}
\newtheorem{corollary}[theorem]{Corollary}
\newtheorem*{theorem*}{Theorem}
\theoremstyle{definition}
\theoremstyle{remark}
\newtheorem{remark}[theorem]{Remark}
\newcommand{\sasha}[1]{} % if you want them off
\newcommand{\GR}[1]{}
\begin{document}

\title[Finite rank Toeplitz operators]{Finite rank Bargmann-Toeplitz operators with non-compactly supported symbols }
%\dedicatory{}

\author[G. Rozenblum]{Grigori Rozenblum}
\address{1. Department of Mathematics \\
                          Chalmers University of Technology \\
                          2.Department of Mathematics  University of Gothenburg \\
                          Chalmers Tv\"argatan, 3, S-412 96
                           Gothenburg
                          Sweden}
\email{grigori@math.chalmers.se}
\

\begin{abstract}
Theorems about characterization of finite rank Toeplitz operators  in Fock-Segal-Bargmann spaces, known previously only for symbols with compact support, are carried over to symbols without that restriction, however with a rather rapid decay at infinity. The proof is based upon a new version of the Stone-Weierstrass approximation theorem.
\end{abstract}
\keywords{ Bargmann spaces,
Toeplitz operators}
%\end{keywords}
\date{}

\maketitle

%\tableofcontents

%%%%%%%%%%%%%%%%%%%%%%%%%%%%%%%%%%%%%%%
%%%%%%%%%%%%%%%%%%%%%%%%%%%%%%%%%%%%%%%
\section{Introduction}\label{intro}
%%%%%%%%%%%%%%%%%%%%%%%%%%%%%%%%%%%%%%%
%%%%%%%%%%%%%%%%%%%%%%%%%%%%%%%%%%%%%%%
Toeplitz operators arise in different topics in Analysis and its applications. Different properties of Toeplitz operators in Bergman type spaces has been studied extensively for many years; in particular, recently, a special attention was directed to the question on conditions for such Toeplitz operator to  have finite rank. The key result in this topic  was obtained by D. Luecking \cite{Lue2}. He proved that a Toeplitz operator in the Bargmann or Bergman space of analytical functions of one complex variable, with \emph{compactly supported} measure acting as symbol, can have finite rank only if the measure consists of finitely many point masses. This result was generalized almost immediately, in particular,  to the case of several variables and to the case of a distribution acting as symbol; a number of applications of such finite rank theorems were found (see \cite{RozToepl} for the detailed description of results and corresponding  references, see also the  recent paper \cite{Rao}). However, the condition of the symbol \emph{to have compact support} remained, since the starting point has been Luecking's theorem all the time.

In the meantime it became more and more clear that with this condition dropped, the properties of Toeplitz operators in the Bargmann space become quite different. The first indication for this was the result by Grudsky and Vasilevsky \cite{GrVas} who had found  a nontrivial radial symbol such that the Toeplitz operator with this symbol is zero. The construction in \cite{GrVas} is rather implicit, however in the recent paper \cite{BauLe} a series of explicit examples of such symbols has been presented. In particular, the operator with symbol
\begin{equation}\label{1.example}
F(z)=|z|^{2s}\sin(a|z|^{2t})e^{|z|^2-|z|^{2t}}; \ 0<t<1/2,\  \arctan a=\frac{t}{2\pi},
\end{equation}
is zero.

Both the elementary proof of the fact that there are no such examples for symbols with compact support and the more advanced proof of the finite rank theorem in \cite{Lue2} are essentially
based upon the application of the classical  Stone-Weierstrass theorem on approximation of functions on compacts.  In order to study the finite rank problem  without  compact support condition, one need to find a proper version of this theorem.

In the present paper we extend the result by D.Luecking  to the case of the symbol without the condition of compactness of its support imposed, this condition being replaced by the requirement of a sufficiently fast  decay at infinity.  The proof is based upon a version of the Stone-Weierstrass theorem for  the case of functions on a locally compact space. This latter version was  inspired by the studies by L.Nachbin \cite{Nach} on this topic, however our setting and the approach to the proof are somewhat different.

After proving the finite rank theorem, we discuss how the consequences of this theorem, concerning the multi-dimensional case as well as the Toeplitz operators in other Bargmann type spaces, should be modifies for non-compactly supported  symbols

\section{Setting}\label{SectSetting}
For a fixed integer $d>0$, we denote by $d\m$ the normalized Gaussian measure on $\C^d$:
\begin{equation}\label{2.gauss}
    \d\m(z)=\pi^n e^{-|z|^2}dV(z),
\end{equation}
where $dV(z)$ is the standard Lebesgue measure on $\R^{2d}\equiv\C^d.$  In the space $L^2(\C^d, d\m)$, the entire analytical functions form a closed subspace $\Bb=\Bb(\C^d)$, which is called the Segal-Bargmann or the Fock space. The orthogonal projection onto $\Bb$ is known to be the integral operator $\Pb$ with kernel $K(z,w)=e^{w\bar{z}}=K_z(w)$, so that the action of the projection can be written as
\begin{equation}\label{2.project}
    (\Pb u)(z)=\int K(z,w)u(w) d\m(w)=\langle u(\cdot), K(z,\cdot) \rangle =\langle u, K_z\rangle.
\end{equation}
Here by the angle brackets $\langle\cdot,\cdot\rangle$ we denote the integral of the product of the entries, without complex conjugation; this notation is naturally extended to the action of a distribution on the function.

For a bounded function $F\in L^{\infty}(\C^d)$ the Toeplitz operator with symbol $F$ is defined as
\begin{equation}\label{1.operator}
    T_V: \Bb\ni u\mapsto \Pb fFu =\int K(z,w)F(w)u(w)d\m(w).
    \end{equation}

Such operator is defined on the whole of $\Bb$ and is bounded. In this paper, similar to  \cite{BauLe}, we are interested also in unbounded symbols. If we drop the boundedness condition for $F$, the Toeplitz operator is not necessarily bounded, being defined on the set of functions $u\in\Bb$ such that  $Fu\in L^2(\C^d, d\m).$ As in \cite{BauLe}, we introduce, for a given $c,$ classes $\Dc_c$ by
\begin{equation}\label{1.Dc}
    \Dc_c=\{F:\C^d\to\C, |F(z)|\le be^{c|z|^2}\} \textrm{ for some } b.
\end{equation}
We also define the class $\Dc_{1,1}$ consisting of functions $F:\C^d\to \C$ such that
\begin{equation}\label{class C11}
    |F(z)|\le be^{|z|^2-\k|z|}, \ b>0, \k>0.
\end{equation}

Generally, it is hard to describe explicitly the domain of the Toeplitz operator for an unbounded symbol.
If $F\in \Dc_c$, $c<1/2$, the domain of $\Tb_F$ contains all functions $u\in \Bb\cap \Dc_{1/2-c}$ and, therefore, is dense in $\Bb$. Under less restrictive condition, $F\in \Dc_c$, $c<1$, and even for $F\in \Dc_{1,1}$, the Toeplitz operator $T_F$ is still densely defined and in particular, its domain contains all analytical polynomials, as well as all functions $K_z(\cdot)$.  In the finite rank problem which we mainly discuss in this paper, it is sufficient to consider the action of the operator on these dense subsets.

Reasonable extensions of the operator $\Tb_F$ beyond \eqref{1.Dc} are discussed in \cite{Ja1}, \cite{Ja2}, and in \cite{BauLe}.

\section{Zero operators}\label{zero}
We repeat here the standard reasoning which proves  that the Toeplitz operator $\Tb_F$ with compactly supported symbol $F$  can be zero if and only if  $F=0$. The 'if' part is obvious. On the other hand, if $\Tb_F u=0$ for any $u\in \Bb$, then the sesquilinear form $(\Tb_F u,v)=\langle\Tb_F u, \bar{v}\rangle$ vanishes for any $u,v\in \Bb$, or
\begin{equation}\label{qform u v}
\int F(z)u(z)\overline{v(z)}d\mu(z)=0.
\end{equation}
We take  analytical polynomials $p(z), q(z)$ as $u(z),v(z)$ in \eqref{qform u v}; the linear span of functions of the form $p(z)\overline{q(z)}$ is the space of all polynomials $P(x,y) $, where   $z=x+\imath y$, $x,y\in \R^d$. By the Stone-Weierstrass theorem, such polynomials are dense in $C$-metric in the space of continuous functions on a closed ball $D$ containing  the support of $F$. Therefore, the functional $\f\mapsto \int F(z)\f(z) d\m(z)$ is a zero functional, and by the Riesz Representation Theorem, $F=0$. Note that the same proof covers also the case of $F$ being a finite Borel measure with compact support as well as (somewhat  modified) an even more general case of $F$ being a compactly supported distribution (see the exposition in \cite{AlexRoz} ).

As one can see, this reasoning does not admit an extension to the case of $F$ without compact support: the involvement of the Stone-Weierstrass theorem prevents it. Moreover,  the example \eqref{1.example} shows that the statement itself is wrong. The symbol in \eqref{1.example} belongs to the class $\Dc_1$ but  for any $c<1$ does not belong to  $\Dc_c$.   Theorem \ref{zerotheorem} below shows that it is the latter circumstance which is crucial.

\begin{theorem}\label{zerotheorem}Suppose that the symbol $F$ belongs to the class $\Dc_{1,1}$.  If $\Tb_F$ is a zero operator then $F=0$.
\end{theorem}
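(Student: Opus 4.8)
The plan is to reduce the assertion to the vanishing of all polynomial moments of a function with exponential decay, and then to settle that by exploiting the holomorphy of a Fourier transform; I expect this second step to carry the real content.

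First I would repeat the elementary reasoning of Section~\ref{zero}. If $\Tb_F$ is the zero operator, then, since its domain contains all analytical polynomials, the sesquilinear form $(\Tb_F p,q)=\langle\Tb_F p,\bar q\rangle$ vanishes for all analytical polynomials $p,q$, which (as in \eqref{qform u v}) means
\[
\int_{\C^d}F(z)\,p(z)\,\overline{q(z)}\,d\m(z)=0 .
\]
The pairing is legitimate: for $F\in\Dc_{1,1}$ the integrand, after inserting the Gaussian weight, is dominated by $b\,|p(z)\,q(z)|\,e^{-\k|z|}$, which is integrable. Using the identity already invoked in Section~\ref{zero}, namely that the linear span of the products $p(z)\overline{q(z)}$ is the full space of polynomials $P(x,y)$ in the real coordinates $z=x+\imath y$, and recalling that $d\m(z)$ is a constant multiple of $e^{-|z|^2}\,dV(z)$, we find that the function $G:=F\,e^{-|\cdot|^{2}}$, regarded as a function on $\R^{2d}$, satisfies
\[
\int_{\R^{2d}}G(w)\,P(w)\,dV(w)=0\qquad\text{for every polynomial }P .
\]
By the defining bound of $\Dc_{1,1}$ we have $|G(w)|\le b\,e^{-\k|w|}$; in particular $G\in L^{1}(\R^{2d})$ and all of its moments $\int_{\R^{2d}}G(w)\,w^{\a}\,dV(w)$ exist and vanish.

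Next I would show that such a $G$ must be $0$. Consider $\widehat G(\x)=\int_{\R^{2d}}G(w)\,e^{-\imath w\cdot\x}\,dV(w)$. Since $|G(w)\,e^{-\imath w\cdot\x}|\le b\,e^{-(\k-|\im\x|)|w|}$, the integral converges absolutely for $|\im\x|<\k$, and, differentiation under the integral sign being justified there, $\widehat G$ is holomorphic on the tube domain $\{\x\in\C^{2d}:|\im\x|<\k\}$, with $\pd^{\a}\widehat G(0)=(-\imath)^{|\a|}\int_{\R^{2d}}G(w)\,w^{\a}\,dV(w)=0$ for every multi-index $\a$ by the previous step. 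A function holomorphic on a connected open subset of $\C^{2d}$ all of whose partial derivatives vanish at a point is identically zero; hence $\widehat G\equiv 0$ on the tube, in particular on $\R^{2d}$, and Fourier inversion gives $G\equiv 0$, i.e. $F=0$.

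The main obstacle is precisely this last step, and it is also the point where the hypothesis is used in an essential way: membership in $\Dc_{1,1}$ provides a \emph{genuine} exponential decay $e^{-\k|w|}$ of $G$, which is exactly what makes the moment problem determinate (equivalently, what opens a strip of holomorphy for $\widehat G$). For a symbol merely in $\Dc_{1}$ — such as the one in \eqref{1.example} — one obtains only that $G$ is bounded, the strip of holomorphy collapses, and the conclusion fails outright. An alternative and more flexible packaging of this step is to prove a Stone--Weierstrass-type approximation theorem directly on the locally compact space $\C^{d}$ weighted by $e^{-\k|z|}$ (the route reflected in the abstract, which has the advantage of also covering symbols that are finite measures or distributions with the corresponding decay); the analytic substance, however, is the determinacy argument above.
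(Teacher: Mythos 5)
Your proof is correct, and it reaches the conclusion by a more direct route than the paper, although the analytic engine is the same. The paper deduces Theorem \ref{zerotheorem} from a weighted polynomial density statement (Theorem \ref{Th.Bernstein}, Bernstein's approximation problem with weight $\o(t)=e^{-\k t}$) combined with the Riesz representation theorem; that density statement is itself proved by duality: a functional $\varsigma$ annihilating all weighted polynomials gives rise to $\varpi(\z)=\varsigma(\o(|s|)e^{\imath\z s})$, holomorphic in the strip $|\im\z|<\k$, with all derivatives at $0$ equal to zero, whence $\varpi\equiv0$ and $\varsigma=0$. You strip away the Hahn--Banach wrapper and apply exactly this mechanism to the one concrete functional that matters, namely to $\widehat G$ with $G=Fe^{-|\cdot|^2}$: vanishing moments plus a strip of holomorphy of width $\k$ force $\widehat G\equiv0$, hence $G=0$. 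This is shorter and self-contained, and it isolates correctly where the hypothesis $F\in\Dc_{1,1}$ enters (the genuine exponential decay of $G$, which fails for the example \eqref{1.example}). What the paper's packaging buys is reusability: the weighted density statement is the prototype of the locally compact Stone--Weierstrass theorem of Section \ref{SectionSW} that later drives the finite rank theorem, and it extends to symbols that are measures or (with more work) distributions of the same decay. Two minor points worth recording: the conclusion is $F=0$ almost everywhere (the same is true of the paper's Riesz-representation ending), and the initial passage from $\Tb_F=0$ to \eqref{qform u v} uses a Fubini interchange, which your estimate $|F(w)p(w)\overline{q(w)}|e^{-|w|^2}\le b\,|p(w)q(w)|\,e^{-\k|w|}$ indeed justifies.
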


\section{S.N. Bernstein's approximation problem } \label{Bernstein}
The S. Bernstein approximation problem consists in finding the conditions for the weight $\o(t)>0$, such that any function $f$, continuous in $\R^d$ and satisfying $f(x)\o(|x|)=o(1)$ as $|x|\to\infty$, can be approximated in $\R^d$ by polynomials, uniformly with weight $\o(|x|)$. Such weights are called \emph{fundamental weights}. This problem was originally stated in \cite{Bernst}, where first results were also obtained. Further on, improvements and generalizations of S.Bernstein's results were found in \cite{IzumiKawata}, \cite{Car}, \cite{Merg} and some later papers. A necessary and sufficient condition for the weight to be fundamental was obtained in dimension $d=1 $, where the problem was related to the question of quasianaliticity. In higher dimensions some sufficient conditions are  only known.

Since we do not aim for reaching sharpest possible results, we give a formulation of such  solution of the Bernstein problem in $\R^d$ which admits a simple formulation and an elementary proof.

\begin{theorem}\label{Th.Bernstein} Let the function $\o(|x|)$ satisfy the inequality
\begin{equation}\label{Weight.}
    \o(t)\le C\exp(-\g_\o t), \ t\in \R^1,
\end{equation}
for some $C,\g_\o>0$. Then $\o$ is a fundamental weight.
\end{theorem}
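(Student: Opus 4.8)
The plan is to reduce the multidimensional weighted approximation problem to a one-dimensional question handled by classical quasianalyticity, and then to exploit the exponential bound \eqref{Weight.} to produce a very crude but sufficient quasianalytic condition. First I would fix a continuous function $f$ on $\R^d$ with $f(x)\o(|x|)=o(1)$ as $|x|\to\infty$, and a tolerance $\ve>0$; the goal is to find a polynomial $P$ with $\sup_{x\in\R^d}|f(x)-P(x)|\o(|x|)<\ve$. Since $\o(t)\le C e^{-\g_\o t}$, the function $f(x)\o(|x|)$ not only vanishes at infinity but in fact $f$ itself is allowed to grow at most like $e^{\g_\o|x|}$ up to lower order, so on the ball $|x|\le R$ we may use the classical (unweighted) Stone-Weierstrass theorem to approximate $f$ by a polynomial $P_R$ uniformly to within $\ve/(2C)$; the whole difficulty is to control $|f(x)-P_R(x)|\o(|x|)$ for $|x|>R$, i.e. to make sure the polynomial $P_R$ does not overshoot faster than $\o$ decays.

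The key step is an interpolation/truncation argument in the radial variable. I would write, for each point $x\in\R^d$, $t=|x|$ and note that on the ray through $x$ the function $g(t):=f(x)$, viewed abstractly, need not be smooth, so instead the plan is to first approximate $f$ on $\R^d$ by a function of the form $h(x)=\sum_k c_k e^{i\langle \xi_k,x\rangle}$ (a trigonometric-type entire function of exponential type), which is possible on compacts and whose product with $\o$ decays, and then to approximate each exponential $e^{i\langle\xi,x\rangle}$, uniformly on $\R^d$ with the weight $\o(|x|)$, by partial sums of its Taylor series. Here is where \eqref{Weight.} does the work: the Taylor remainder of $e^{i\langle\xi,x\rangle}$ of order $N$ is bounded by $(|\xi|\,|x|)^N/N!$, and multiplying by $C e^{-\g_\o|x|}$ and taking the supremum over $|x|$ gives, by the standard estimate $\sup_{t>0} t^N e^{-\g_\o t}=(N/(e\g_\o))^N$, a bound $C(|\xi| N/(e\g_\o))^N/N!\sim C(|\xi|/\g_\o)^N/\sqrt{2\pi N}$, which tends to $0$ as $N\to\infty$ provided $|\xi|<\g_\o$. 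Thus exponentials $e^{i\langle\xi,x\rangle}$ with $|\xi|$ small enough are uniformly approximable by polynomials in the $\o$-weighted norm.

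To remove the restriction $|\xi|<\g_\o$ I would iterate: an exponential of large frequency is a product of finitely many exponentials of small frequency, and the set of $\o$-weighted-approximable continuous functions which are themselves $O(e^{-\g_\o|x|/2})$-bounded (say) forms an algebra closed under the weighted sup-norm, so finite products stay in the closure after the obvious rescaling of $\ve$. Hence every $e^{i\langle\xi,x\rangle}$, and then every finite trigonometric sum $h$, is in the closure of the polynomials in the weighted norm; combined with the first step (approximating $f$ by such $h$ on a large ball, with the tails controlled by $f\o=o(1)$ and $h\o$ small), this yields the theorem. I expect the main obstacle to be the bookkeeping in the iteration step, namely verifying that the weighted-closure of polynomials really is closed under multiplication within the relevant growth class — one must be careful that multiplying two functions each $o(1/\o)$ need not stay $o(1/\o)$, so the algebra should be taken inside the smaller class of functions bounded by a fixed multiple of $e^{-\g_\o|x|}$ (equivalently $\o$ replaced by $\o^{1-\delta}$ at intermediate stages), and only at the very end does one pass back to the original $\o$ by absorbing a factor $e^{-\delta\g_\o|x|}$; making this chain of weight adjustments consistent is the delicate point, while every individual estimate is the elementary factorial bound above.
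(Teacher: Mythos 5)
Your strategy is genuinely different from the paper's: the paper argues by duality (a functional $\varsigma$ annihilating all $\o(|s|)p(s)$ gives rise to $\varpi(\z)=\varsigma(\o(|s|)e^{i\z s})$, holomorphic in the tube $|\im\z|<\g_\o$ thanks to \eqref{Weight.}; all derivatives of $\varpi$ vanish at $0$, so $\varpi\equiv0$ on the tube and in particular on $\R^d$, and Fourier inversion then kills $\varsigma$), whereas you attempt a direct construction. Your first two steps are sound: the reduction to finite trigonometric sums, and the Taylor-remainder computation showing that $e^{i\langle\xi,x\rangle}$ is approximable in the $\o$-weighted sup norm when $|\xi|<\g_\o$. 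That factorial-versus-exponential estimate is the correct quantitative heart of the matter and is the constructive counterpart of the strip of holomorphy in the paper's argument.

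The gap is in the iteration step for $|\xi|\ge\g_\o$, and it is twofold. First, the closure of the polynomials in the $\o$-weighted norm, intersected with any reasonable growth class, cannot be shown to be an algebra at this stage: if $p_n\to u$ and $q_n\to v$ in $|\cdot|_{\vb}$ with $\vb=\o$, the cross term $p_n(q_n-v)$ is controlled only via $|p_n|\le|u|+\ve_n\o^{-1}$, which produces an unbounded contribution $\ve_n^2\o^{-1}$; establishing multiplicative closedness of the closure is essentially equivalent to the theorem itself, so invoking it is circular. Second, even the product rule that \emph{does} hold --- $u$ approximable with weight $e^{-\g_1|x|}$ and $v$ with weight $e^{-\g_2|x|}$, both bounded, gives $uv$ approximable only with weight $e^{-(\g_1+\g_2)|x|}$ --- fails to extend the frequency range: writing $e^{i\langle\xi,x\rangle}$ as the $m$-th power of $e^{i\langle\xi/m,x\rangle}$, each factor is approximable with weight $e^{-\g|x|}$ only when $|\xi|/m<\g$, and the product then lands in the closure for the much weaker weight $e^{-m\g|x|}$; no choice of $m$ or of intermediate weights $\o^{1-\delta}$ gets you back to $\o$ unless $|\xi|<\g_\o$ to begin with. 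The repair is a different induction: first show, by the same Taylor estimate with the harmless extra factor $(1+|x|)^k$ absorbed into $\sup_t t^{N+k}e^{-\g_\o t}$, that $p(x)e^{i\langle\eta,x\rangle}$ is approximable in the $\o$-weighted norm for every fixed polynomial $p$ and every $|\eta|<\g_\o$; then observe that if $e^{i\langle\xi,x\rangle}$ is approximable, so is $e^{i\langle\xi+\eta,x\rangle}=e^{i\langle\xi,x\rangle}e^{i\langle\eta,x\rangle}$, since replacing the first factor by an approximating polynomial costs only its $\o$-weighted distance (the second factor being unimodular), and the resulting polynomial-times-exponential is covered by the previous step. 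Hence the set of approximable frequencies is stable under translation by any $\eta$ with $|\eta|<\g_\o$ and so equals all of $\R^d$. With that replacement your argument closes; as written, the key step does not.
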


\begin{remark}\label{rem.Bernst}More sharp results allow a weaker condition than \eqref{Weight.}. In particular, the condition  \eqref{Weight.} can be replaced by
 $$\o(t)\le C\exp(-\g_\o t(\log_1t\log_2 t\dots \log_Nt)^{-1}),$$ where $\log_1 t=\max(1, \log t), \log_j t=\log_1(\log_{j-1}t) ).$
\end{remark}
For completeness, we present a short proof of Theorem \ref{Th.Bernstein}.  Being quite elementary, it, probably, belongs to the folklore; the first exposition where the author could find this approach  was in \cite{Nach}.
\begin{proof}  Consider the Banach space $\tB=C_0(\R^d)$ of bounded continuous functions on $\R^d$, tending to zero at infinity, equipped with the $\sup$ norm. Let $\varsigma$ be a continuous linear functional on $\tB$. For $\z\in\C^d$, consider a family of functions $\f_\z(s)=\o(|s|)\exp(\imath \z s)\ s\in \R^d$. By \eqref{Weight.},  $\f_\z(s)$ belongs to $\tB$ as long as $|\im(\z)|<c_\o$. Moreover,  in the same domain the function $\varpi(\z)=\varsigma(\f_\z)$ admits differentiation in $\z$ and the Cauchy-Riemann equations are satisfied.
Therefore, the function $\varpi(\z)$ is holomorphic for $|\im(\z)|<\g_\o$.

 Without losing in generality, we can assume that $\o$ is smooth. Now, suppose that $\o$ is not a fundamental weight. This would mean that the set of  functions $\o(|s|)p(s)$, with $p(s)$ being all possible   polynomials in $s$, is \emph{not} dense in $\tB$. By the Hahn-Banach Theorem, there must exist a nontrivial linear continuous functional $\varsigma\in \tB'$ which is annulled on the subspace spanned by $\o(|s|)p(s)$.

Since $\varsigma(\o(|s|)p(s))=(p(\partial_\z)\varpi)(0)$ for all polynomials $p$,  this means that the function $\varpi(\z)$ is identically zero in the domain $|\im(\z)|<\g_\o$. In particular, this function is zero for  $\z\in \R^d$:

 \begin{equation}\label{BernstEq}
    \varpi(\z)=\varsigma(\o(|s|)\exp(i\z s))=0,\  \z\in\R^d.
 \end{equation}

 Take any smooth function $h(s)$ with compact support, Denote by $\vartheta(\z)$  the   Fourier transform of the function $h(s)\o(|s|)^{-1}$.
Now, multiply \eqref{BernstEq} by $\vartheta(\z)$ and integrate in  $\z$ over $\R^d$. We obtain $\varsigma(h)=0$. Since smooth functions with compact support are dense in $\tB$ this means that the functional $\varsigma$ is trivial. This contradicts the choice of $\varsigma.$
\end{proof}

Now we can give \emph{the Proof of Theorem \ref{zerotheorem}}. In fact, consider the weight function $\o(t)=\exp(-\k t)$, where $\k$ is the constant in \eqref{class C11}. Then, by Theorem \ref{Th.Bernstein}, any continuous function with compact support can be approximated on $\C^d$ by polynomials of variables $x,y\in \R^d$ with respect to the weight $\o(|z|)$. Moreover, since $F(z)e^{-|z|^2}\le C \o(|z|)$, the relation \eqref{qform u v} implies that $\int F(z) f(z)d\m(z)$ vanishes for any continuous function $f$ with compact support. Therefore, again by the Riesz representation theorem,  the symbol $F$ should be zero. \hfill

Theorem \ref{zerotheorem} can also be extended to the case of $F$ being a distribution in a certain class.

\section{An extension of the Stone-Weierstrass theorem}\label{SectionSW}

The classical Stone-Weierstrass theorem deals with the approximation of continuous functions defined on compact spaces. In order to handle the finite rank problem, we need an extension to the case of a non-compact locally compact space, with uniform approximation replaced by the weighted approximation with proper weight. An approach to such an extension has been developed by L. Nachbin in \cite{Nach}.  We present a somewhat different, more soft-analytic, approach enabling one to obtain a a required version of the theorem in a rather simple way, on the base of Bernstein type theorems and pure topological considerations.

The version of the Stone-Weierstrass theorem, which we present here, is inspired by considerations in \cite{Nach}. We impose a certain (noncritical for applications) restrictions on the algebra of approximating functions, which enables us to give a much shorter and 'softer' proof.

Let $X$ be a locally compact  completely regular ($\mathrm{T}_{3\frac12}$) topological space (see, e.g., \cite{Ke} for definitions). For a continuous function $f$ on $X$ we say that $f\to 0$ at infinity if for any $\e>0$ the set $\{x\in X: |f(x)|\ge\e\}$ is compact. Similarly, we say that $f\in C(X)$ tends to infinity at infinity if the set $\{x\in X: |f(x)|\le R\}$ is compact for any $R>0$. For  a    function $\mathbf{v}(x)\ge0$  on $X$ - the \emph{weight function}-, we denote by $C^0_{\mathbf{v}}$ the Banach space of functions $f\in C(X)$ such that $\vb (x)f(x)\to 0$ at infinity, with the norm $|f|_\vb=\sup_{x\in X} |f(x)|\vb(x).$

For a system  $\Ab\subset C^0_{\mathbf{v}}$  of functions $\ab_1, \dots,\ab_N$ we say that it \emph{separates points} if for any two different points $x, x'\in X$ there exists a function $\ab_j\in\Ab$ such that $\ab_j(x)\ne \ab_j(x')$. We also say that the system $\Ab$ \emph{tends to infinity at infinity} if $\sum|\ab_j(x)|\to\infty$ at infinity.

\begin{theorem}\label{Th.SW} Let $\Ab\subset C^0_{\mathbf{v}}$ be a finite set of functions $\ab_1, \dots,\ab_N$, containing a nonzero constant, separating points and tending to infinity at infinity. Suppose also that
\begin{equation}\label{CondNach}
\mathbf{v}(x)\le C\exp(-c'|\mathbf{a}_j(x)|)
\end{equation}
for some $c'>0$ and for all $j=1,\dots, N.$ Then the algebra of polynomials in $\mathbf{a}_j(x)$ and $\overline{\mathbf{a}_j}(x)$,  $j=1,\dots, N,$  is dense in $C^0_{\mathbf{v}}$.
\end{theorem}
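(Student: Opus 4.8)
The plan is to reduce Theorem~\ref{Th.SW} to the Euclidean Bernstein-type result Theorem~\ref{Th.Bernstein} by using the family $\Ab$ to realize $X$ as a closed subset of $\C^N$, and then to argue by duality. First I would look at the map $\ab=(\ab_1,\dots,\ab_N)\colon X\to\C^N$. It is continuous (each $\ab_j\in C(X)$), injective (since $\Ab$ separates points), and proper: the hypothesis that $\Ab$ tends to infinity at infinity says precisely that $\{x\in X:\sum_j|\ab_j(x)|\le R\}$ is compact for every $R$, and the $\ab$-preimage of a ball in $\C^N$ is a closed subset of such a set. As a proper continuous map into a locally compact Hausdorff space is closed, $\ab$ is a homeomorphism of $X$ onto a \emph{closed} set $Y=\ab(X)\subset\C^N$. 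Transporting everything by $\ab$, the weight $\vb$ becomes a weight $w$ on $Y$ (with $w(\ab(x))=\vb(x)$), the space $C^0_\vb$ is isometrically identified with $C^0_w(Y)$ (the homeomorphism $\ab$ is proper, so the two notions of ``$\to0$ at infinity'' correspond), and the algebra of polynomials in the $\ab_j,\overline{\ab_j}$ becomes the algebra of restrictions to $Y$ of polynomials in the $2N$ real coordinates of $\C^N$. So it is enough to prove: if $Y\subset\R^{2N}$ is closed and $w\in C(Y)$ is nonnegative with $w(y)\le C\exp(-\gamma|y|)$ for some $\gamma>0$, then the real polynomials are dense in $C^0_w(Y)$. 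The exponential bound follows from \eqref{CondNach}: it gives $\vb\le C\exp(-c'\max_j|\ab_j|)$, and $\max_j|\zeta_j|\ge|\zeta|/\sqrt N$ on $\C^N$, so one may take $\gamma=c'/\sqrt N$. (One may also assume $w>0$ on $Y$, which is the case relevant to the applications; where $w$ vanishes $|\cdot|_w$ is only a seminorm and the argument simply runs on the open set $\{w>0\}$.)

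Next I would dualize this reduced statement. Multiplication by $w$ is an isometric isomorphism $C^0_w(Y)\to C_0(Y)$, so by Riesz--Markov every $\varsigma\in(C^0_w(Y))'$ has the form $\varsigma(f)=\int_Y f\,d\nu$, where $\nu=w\mu$ and $\mu\in M(Y)$ represents $\varsigma$ on $C_0(Y)$. If the polynomials were not dense, Hahn--Banach would give a nonzero such $\varsigma$ annihilating all polynomials, hence a \emph{nonzero} finite complex measure $\nu$ on $Y$ with $\int_Y y^\alpha\,d\nu(y)=0$ for every multi-index $\alpha$. Crucially, $d|\nu|=w\,d|\mu|\le C\exp(-\gamma|y|)\,d|\mu|$ yields the exponential moment bound $\int_{\R^{2N}}e^{\gamma|y|}\,d|\nu|(y)\le C\norm{\mu}<\infty$ (extending $\nu$ by zero off $Y$). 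Now take the weight $\o(t)=e^{-\gamma t}$ on $\R^1$, which satisfies \eqref{Weight.}. For $f\in C^0_\o(\R^{2N})$ one has $|f(y)|\le|f|_\o\,e^{\gamma|y|}$, so $f\mapsto\int f\,d\nu$ is a bounded linear functional on $C^0_\o(\R^{2N})$, and by the previous line it kills every polynomial (polynomials lie in $C^0_\o$). By Theorem~\ref{Th.Bernstein} (with $d=2N$) the polynomials are dense in $C^0_\o(\R^{2N})$, hence this functional vanishes; testing it against $C_c(\R^{2N})\subset C^0_\o(\R^{2N})$ forces $\nu=0$, contradicting $\varsigma\ne0$. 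This proves density, and with it Theorem~\ref{Th.SW}.

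The analytic substance of the theorem is entirely carried by Theorem~\ref{Th.Bernstein}; what requires care here is the topological step --- verifying that a separating family that goes to infinity really produces a \emph{closed} topological embedding of $X$ into $\C^N$, so that Riesz--Markov on $Y$ and the Euclidean Bernstein theorem become applicable --- together with the elementary but necessary bookkeeping that converts the coordinatewise bound \eqref{CondNach} into a single exponential bound in $|y|$ while keeping the two ``vanishing at infinity'' conditions aligned under the transport map. I expect the properness/closed-image argument, and the harmless edge case in which $\vb$ has zeros, to be the only genuinely fussy points.
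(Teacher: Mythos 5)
Your argument is correct, and it shares with the paper the two essential ingredients --- the observation that a separating family tending to infinity at infinity makes $\Ac=(\ab_1,\dots,\ab_N)$ a proper topological embedding of $X$ onto a closed subset of Euclidean space, and the reduction to Theorem~\ref{Th.Bernstein} --- but the mechanism by which you invoke Bernstein is genuinely different. The paper proves a superposition lemma (Lemma~\ref{LemTop}): every $f\in C_0(X)$ factors as $f=g\circ\Ac$ with $g\in C_0(\R^N)$, obtained by passing to one-point compactifications and then extending $f\circ\Ac^{-1}$ off the closed image by the Tietze theorem and cutting off; Bernstein is then applied \emph{directly} to approximate $g$ in the weighted sup norm on $\R^N$, and the estimate is pulled back through \eqref{CondNach}. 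You instead transport the whole problem to the closed image $Y$, dualize via Hahn--Banach and Riesz--Markov, extend the annihilating measure by zero to all of $\R^{2N}$, and apply Bernstein in its dual form to kill it. In effect you extend the \emph{measure} by zero where the paper extends the \emph{function} by Tietze; the two manoeuvres are dual to one another. Your route dispenses with Lemma~\ref{LemTop} entirely and is arguably more economical given that the paper's own proof of Theorem~\ref{Th.Bernstein} is already a duality argument, at the price of needing the representation of $(C^0_w(Y))'$ as measures, which requires the weight to be continuous and strictly positive --- a hypothesis the paper never states for $\vb$ but which holds in all its applications, and which you correctly flag as the only fussy edge case. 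The paper's direct approach stays at the level of uniform weighted estimates (see \eqref{appr}) and needs nothing about the dual space. Both the bookkeeping $\max_j|\zeta_j|\ge|\zeta|/\sqrt N$ converting \eqref{CondNach} into a radial exponential bound and the properness-implies-closed-embedding step are handled correctly.
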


The theorem \ref{Th.SW} can be considered both for real-valued and complex-valued spaces of functions. The complex case is obviously reduced to the real one by considering the system of functions $\re(\ab_j), \im(\ab_j)$. Therefore we consider the real case only further on.

The proof is based upon a topological lemma. It seems that it must belong to the folklore, however the author was unable to locate it in the literature, therefore, a proof is presented.
\begin{lemma}\label{LemTop} Let the system $\Ab\in C(X)$ satisfy the conditions of Theorem \ref{Th.SW}. Then for any function $f\in C_0(X)$, there exists a function $g\in C_0(\R^N)$ such that
\begin{equation}\label{superposition}
f(x)=g(\ab_1(x),\dots,\ab_N(x)), \ x\in X.
\end{equation}
\end{lemma}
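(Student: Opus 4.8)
The plan is to use the map $\Ab = (\ab_1,\dots,\ab_N)\colon X\to\R^N$ as a quotient-type map onto its image and then transfer $f$ through it. First I would set $Y=\Ab(X)\subset\R^N$ and show that $\Ab$ is a \emph{proper} map onto $Y$: indeed, since the system tends to infinity at infinity, $\sum_j|\ab_j(x)|\to\infty$ outside compacts, so the preimage under $\Ab$ of any bounded (hence, in $Y$, any relatively compact) set is contained in a set of the form $\{x:\sum_j|\ab_j(x)|\le R\}$, which is compact. Because $X$ is locally compact and $Y\subset\R^N$ is Hausdorff, a continuous proper map is automatically closed, so $\Ab$ is a closed map onto $Y$. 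Combined with the fact that $\Ab$ separates points of $X$ (so it is injective), $\Ab$ is a continuous closed injection, hence a homeomorphism of $X$ onto the locally closed subset $Y=\Ab(X)$ of $\R^N$. In particular $Y$ is closed in $\R^N$: if $y$ were a limit point of $Y$ not in $Y$, then $\Ab^{-1}$ of a small closed ball about $y$ would fail to be compact, contradicting properness; more directly, a proper map into a locally compact Hausdorff space has closed image.

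Given the homeomorphism $\Ab\colon X\xrightarrow{\ \sim\ }Y$, define $\tilde g = f\circ\Ab^{-1}\in C_0(Y)$; note that $\tilde g\to 0$ at infinity \emph{along $Y$} because $f\in C_0(X)$ and $\Ab$ is proper (the preimages match up the ``infinities''). It then remains to extend $\tilde g$ from the closed set $Y\subset\R^N$ to a function $g\in C_0(\R^N)$. This is the step I expect to require the most care, though it is standard: since $Y$ is closed in the normal space $\R^N$, the Tietze extension theorem gives a bounded continuous extension $g_0$ on $\R^N$ with the same sup norm; to also arrange decay at infinity, I would multiply by a cutoff, or better, extend $\tilde g$ together with the point at infinity using the one-point compactifications — $X^+\cong Y^+$ sits as a closed subset of $(\R^N)^+=\Sl^N$, $\tilde g$ extends to a continuous function on $Y^+$ vanishing appropriately, Tietze on $\Sl^N$ produces a continuous extension vanishing at the point at infinity, and restricting back to $\R^N$ yields $g\in C_0(\R^N)$. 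By construction $g(\ab_1(x),\dots,\ab_N(x)) = g(\Ab(x)) = \tilde g(\Ab(x)) = f(x)$, which is \eqref{superposition}.

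The main obstacle is genuinely the image step: one must be sure that $Y=\Ab(X)$ is \emph{closed} in $\R^N$ (not merely locally closed) so that Tietze applies directly, and that the bijection $\Ab$ is a homeomorphism rather than just a continuous bijection — both hinge entirely on properness, which is exactly what the hypothesis ``$\Ab$ tends to infinity at infinity'' is there to provide. The hypothesis \eqref{CondNach} and the weight $\vb$ play no role in this particular lemma; they enter only afterwards, when one applies the classical / Bernstein–type Stone–Weierstrass theorem on $\R^N$ to approximate $g$ by polynomials in the coordinates and their conjugates and pulls the approximation back to $X$.
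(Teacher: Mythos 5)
Your proposal is correct and follows essentially the same route as the paper: both establish that $\Ac=(\ab_1,\dots,\ab_N)$ is a homeomorphism of $X$ onto a closed subset of $\R^N$ (the paper via extending to the one-point compactifications, you via properness --- equivalent formulations of the same fact), and then apply the Tietze extension theorem followed by a cutoff to land in $C_0(\R^N)$. Your treatment of the closedness of the image and of the decay of the extension is, if anything, slightly more careful than the paper's, which dismisses closedness as ``obvious''.
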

\begin{proof} Denote by $\Ac$ the continuous mapping $\Ac:X\to \R^N$, $x\mapsto (\ab_1(x), \dots,\ab_N(x))$ and set $Q=\Ac(X)\subset \R^N$. Consider the one-point compactification $X^\maltese$ of the space $X$ and the one-point compactification $\R^{N\maltese}$ of $\R^N$. By the conditions of Theorem \ref{Th.SW}, the mapping $\Ac$ extends by continuity to the mapping $\Ac^\maltese:X^\maltese\to \R^{N\maltese}$, and the image $\Qb^\maltese\subset \R^{N\maltese}$ of $X^\maltese$ under  this mapping is compact, as the image of a compact under a continuous mapping.  The point-separating property implies that the mapping $\Ac$ is injective, therefore, $\Ac^\maltese$ is also injective, since only the compactifying point in $X^\maltese\setminus X$ is mapped to the compactifying point in $\R^{N\maltese}\setminus \R^N$. By the well known property, this implies that the inverse mapping $(\Ac^\maltese)^{-1}:\Qb^\maltese\to X^\maltese$ is continuous as well, together with its restriction to $\Qb=\Ac(X)\subset \R^N$. So, we have a continuous function $g_0$ defined on $\Qb$,  $g_0=f\circ\Ac^{-1}$ such that $f=g_0\circ \Ac^{-1}$. However, the function $g_0$ is  defined only on the set $\Qb$. It remains to continue it, by means of the Brauer-Tietze-Uryson  lemma, from the (obviously, closed) set $\Qb$ to a continuous function $g$  on the whole of $\R^d$.   Finally we  multiply $g$ by a continuous function $\psi$ that equals $1$ on the compact set $\Ac(\supp(f))$ and vanishes outside some other compact set.
\end{proof}

Now we are able to give the proof of Theorem \ref{Th.SW}.

\begin{proof} Let $f$ be a function in $C_\vb(X)$. By density argument, it suffices to  suppose that $f$ has compact support. By Lemma \ref{LemTop}, there exists a function $g\in C(\R^N), $ having compact support, such that
$f(x)=g(\ab_1(x), \dots, \ab_N(x))$.  Now, by the Bernstein approximation theorem, (see Theorem \ref{Th.Bernstein}), for any $\ve>0$, there exists a polynomial $p(w), \ w\in \R^N$, such that $|p(w)-g(w)|\exp(-c_0|w|)<\ve$ for all $w\in \R^N$. Thus, for $w=\Ac(x)$, $x\in X$, we have
\begin{equation}\label{appr}
    |p(\ab_1(x), \dots,\ab_N(x))-f(x)|\exp(-c_0|\Ac(x)|)<\ve,
\end{equation}
which, by the condition \eqref{CondNach}, implies the statement of the Theorem.
\end{proof}

Recall that the classical Stone-Weierstrass theorem can be derived from the Weierstrass polynomial approximation theorem in a way, similar to our derivation of Theorem \ref{Th.SW} from the Bernstein approximation theorem. At the same time, the latter is a particular case of  Theorem \ref{Th.SW}. In fact, if, say, in the one-dimensional real case ($X=\R^1$), we consider as $\Ab$ the set of two functions, $\ab_1=1, \ab_2=x$, then the algebra generated by these two functions is exactly the algebra of polynomials in $x$ variable, and the condition  \eqref{CondNach} takes exactly the form $|\vb(x)|=O(\exp(-c'|x|))$, i.e., coincides with  the condition of Theorem \ref{Th.Bernstein}.

This example enables one to understand better the dependence of the condition imposed on the weight on the set  of approximating functions. Consider, again in the above setting,
the system of generators $\Ab$ consisting of functions $\ab_1=1, \ab_2=x^3$. Then the algebra generated by $\Ab$ is the algebra of polynomials in $x$ variable, with degrees of all monomials divisible by $3$. Theorem \ref{Th.SW} requires then that $\vb(x)=O(\exp(-c'|x|^3))$. Exactly the same condition on the weight is imposed by the Bernstein theorem, after we make the change of variables $t=x^3.$ So, generally, the smaller is the approximating algebra, the faster should the weight  decay at infinity. This effect is, of course,  not present in the problem of approximation on compacts.

We will need a generalization of Theorem \ref{Th.SW}, which, actually, is its immediate consequence.

Let  $\Ab$  be a system of continuous functions tending to infinity at infinity, i.e., without the condition of separation of points imposed. For any $x\in X$, we denote by $\tE(x)$ the set of points $y\in X$ such that $\ab_j(y)=\ab_j(x)$ for all $j$.  We say that the function $f\in C(X)$ is subordinate $\Ab$, $f\sqsubset\Ab$, if $f$ is constant on any subset $\tE(x)$. In particular, $\ab_j\sqsubset\Ab$

\begin{theorem}\label{Th.SW2}Let the system of functions $\Ab$ satisfy
all conditions   of Theorem \ref{Th.SW} except the separations of points. Then the algebra of polynomials in $\mathbf{a}_j(x)$ and $\overline{\mathbf{a}_j}(x)$,  $j=1,\dots, N,$  is dense in the space of functions $f\in C^0_{\mathbf{v}}$ such that $f\sqsubset\Ab$.
\end{theorem}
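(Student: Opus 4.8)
The plan is to deduce Theorem~\ref{Th.SW2} from Theorem~\ref{Th.SW} by passing to the quotient space on which the system $\Ab$ does separate points. First I would introduce the equivalence relation $x\sim y \iff y\in\tE(x)$, i.e. $\ab_j(y)=\ab_j(x)$ for all $j$, let $\tilde X = X/\!\sim$ be the quotient with the quotient topology, and let $\pi\colon X\to\tilde X$ be the canonical projection. Each $\ab_j$ and the weight $\vb$ (the latter because \eqref{CondNach} forces $\vb$ to be constant on each $\tE(x)$, since each $\ab_j$ is, hence $\vb$ descends) factor through $\pi$ as $\ab_j = \tilde\ab_j\circ\pi$, $\vb = \tilde\vb\circ\pi$; similarly any $f\sqsubset\Ab$ factors as $f = \tilde f\circ\pi$ with $\tilde f\in C(\tilde X)$.

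Next I would verify that $\tilde X$ inherits the hypotheses needed to apply Theorem~\ref{Th.SW}. The separating system $\{\tilde\ab_j\}$ now separates points of $\tilde X$ by construction; it still contains a nonzero constant and still tends to infinity at infinity, and $\tilde\vb$ satisfies $\tilde\vb \le C\exp(-c'|\tilde\ab_j|)$. The point requiring care is topological: I must check that $\tilde X$ is again locally compact and completely regular ($\mathrm T_{3\frac12}$). The cleanest route is to observe, exactly as in the proof of Lemma~\ref{LemTop}, that the map $\Ac\colon X\to\R^N$ induces an injective continuous map $\tilde\Ac\colon\tilde X\to\R^N$ which extends to a continuous injection $\tilde\Ac^\maltese$ of the one-point compactification $\tilde X^\maltese$ into $\R^{N\maltese}$; since a continuous injection from a compact space to a Hausdorff space is a homeomorphism onto its image, $\tilde X^\maltese$ is homeomorphic to a compact subset of $\R^{N\maltese}$, hence metrizable, hence $\tilde X$ is locally compact and completely regular (indeed metrizable). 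In particular $C^0_{\tilde\vb}(\tilde X)$ is a genuine Banach space of the type to which Theorem~\ref{Th.SW} applies.

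Then the conclusion is essentially immediate. The pullback $\pi^*\colon g\mapsto g\circ\pi$ is an isometry of $C^0_{\tilde\vb}(\tilde X)$ onto the closed subspace $\{f\in C^0_{\vb}(X): f\sqsubset\Ab\}$ of $C^0_{\vb}(X)$, and it carries the algebra of polynomials in $\tilde\ab_j,\overline{\tilde\ab_j}$ onto the algebra of polynomials in $\ab_j,\overline{\ab_j}$. By Theorem~\ref{Th.SW} the former algebra is dense in $C^0_{\tilde\vb}(\tilde X)$; applying the isometry $\pi^*$ gives density of the latter in $\{f\in C^0_{\vb}(X):f\sqsubset\Ab\}$, which is the assertion.

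The main obstacle is the single verification that the quotient $\tilde X$ is still a locally compact completely regular space rather than some pathological quotient; this is what forces the detour through the compactification and the injection into $\R^{N\maltese}$. Everything else is bookkeeping: that $\vb$, being dominated by $\exp(-c'|\ab_j|)$ with $\ab_j\sqsubset\Ab$, is itself subordinate to $\Ab$ and so descends; that $\pi^*$ is an isometric bijection onto the subordinate subspace; and that it intertwines the two polynomial algebras. One could alternatively bypass the quotient entirely and argue directly: given $f\sqsubset\Ab$ of compact support, the superposition lemma (Lemma~\ref{LemTop}) already produces $g\in C_0(\R^N)$ with $f = g\circ\Ac$ using only that $\Ac^\maltese$ is injective \emph{on the fibers it needs to be}, and then the Bernstein step applies verbatim; but the quotient formulation seems cleaner to state, so I would present it that way.
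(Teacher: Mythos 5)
Your proof follows essentially the same route as the paper: introduce the equivalence relation by the fibers $\tE(x)$, form the quotient $X_{\Ab}$, push the functions and the weight down, and apply Theorem \ref{Th.SW} there (the paper is terser, omitting the topological verification you supply, and defines the descended weight as $\widetilde{\vb}(\widetilde{x})=\inf\{\vb(x):x\in\widetilde{x}\}$). One small inaccuracy in your version: \eqref{CondNach} is only an upper bound, so it does not force $\vb$ to be constant on the fibers and hence $\vb$ need not literally descend; the clean fix is to take the quotient weight to be $\sup\{\vb(x):x\in\widetilde{x}\}$, which makes $\pi^*$ the isometry you use and still satisfies \eqref{CondNach} because each $|\ab_j|$ is constant on fibers.
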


\begin{proof} We introduce the equivalence relation on $X$, setting $x\backsim y$ if $y\in \tE(x)$, so the sets $\widetilde{x}=\tE(x)$ are equivalence classes. It follows from the condition that $\Ab$  tends to infinity at infinity that any set of the form $\tE(x)$ is compact. Consider the quotient space $X_{\Ab}$ consisting of these equivalence classes, with the standard  topology generated by the projection $\bbp:x\mapsto\tE(x)$. Therefore,  for any function  $f\sqsubset\Ab$, the function $\widetilde{f}=f\circ(\bbp)^{-1}$ is well defined and continuous on $X_{\Ab}$. The functions $\widetilde{\ab_j}=\ab_j\circ (\bbp)^{-1}$ on $X_{\Ab}$ satisfy the conditions of Theorem \ref{Th.SW}, provided we define the weight $\widetilde{\vb}(\widetilde{x})= \inf\{\vb(x):  x\in \widetilde{x}\}$ . The application of  Theorem \ref{Th.SW} gives now the result we aim for.
\end{proof}

We will need a simple corollary of Theorem \ref{Th.SW2} concerning the approximation in the integral sense.

\begin{corollary}\label{cor.SW} Let the conditions of Theorem \ref{Th.SW2} be fulfilled. Let $\n_0$ be  a nonnegative locally finite Borel measure on $X$ such that

\begin{equation}\label{measure weight}
\textrm {the measure   }  \vb^{-1}(x)\n_0(dx) \textrm{ is finite.}
\end{equation}

 Then  any function $f\in C^0_{\mathbf{v}}$ such that $f\sqsubset\Ab$ can be arbitrarily exact approximated by the functions of the form $p(\ab_1, \dots, \ab_N)$ in the sense of $L_1(\n_0)$ with a polynomial $p$.\end{corollary}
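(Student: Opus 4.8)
The plan is to deduce the $L_1(\n_0)$ approximation from the uniform weighted approximation in Theorem \ref{Th.SW2} by a direct estimate, exploiting the finiteness hypothesis \eqref{measure weight} to convert a small $|\cdot|_\vb$-norm into a small $L_1(\n_0)$-norm. First I would fix $f\in C^0_\vb$ with $f\sqsubset\Ab$ and an arbitrary $\ve>0$. By Theorem \ref{Th.SW2}, the algebra of polynomials in $\ab_j,\overline{\ab_j}$ is dense in the subspace $\{h\in C^0_\vb: h\sqsubset\Ab\}$ with respect to the norm $|h|_\vb=\sup_{x\in X}|h(x)|\vb(x)$; hence there is a polynomial $p$ such that, writing $P(x)=p(\ab_1(x),\dots,\ab_N(x),\overline{\ab_1(x)},\dots,\overline{\ab_N(x)})$, one has $|f-P|_\vb<\ve$, i.e.\ $|f(x)-P(x)|\le \ve\,\vb(x)^{-1}$ for all $x\in X$.

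Next I would integrate this pointwise bound against $\n_0$:
\begin{equation}\label{cor.est}
\int_X |f(x)-P(x)|\,\n_0(dx)\le \ve\int_X \vb(x)^{-1}\,\n_0(dx).
\end{equation}
By hypothesis \eqref{measure weight} the integral on the right is a finite constant $M$ independent of $\ve$, so the left-hand side is at most $M\ve$, which can be made as small as we wish. One should note that $P$ automatically satisfies $P\sqsubset\Ab$ (being a polynomial in the $\ab_j$), so it genuinely lies in the claimed approximating family, and that $f-P\in C^0_\vb$ is $\n_0$-measurable, so the integral in \eqref{cor.est} makes sense. This proves the corollary.

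The only point that requires a moment's care — and the closest thing to an obstacle — is the measurability/integrability bookkeeping: one must check that $f-P$ is Borel measurable (clear, since both are continuous) and that the bound $|f(x)-P(x)|\vb(x)\le\ve$ together with local finiteness of $\n_0$ and finiteness of $\vb^{-1}\n_0$ actually licenses the inequality \eqref{cor.est} rather than producing an $\infty\le\infty$ statement. This is immediate once one observes $\vb^{-1}\n_0$ is a genuine finite measure, so no localization or truncation argument is needed; the estimate \eqref{cor.est} is a one-line consequence of monotonicity of the integral. Thus essentially all the content sits in Theorem \ref{Th.SW2}, and the corollary is, as stated, an immediate consequence.
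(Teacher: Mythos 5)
Your proposal is correct and follows essentially the same route as the paper: apply the uniform weighted density from Theorem \ref{Th.SW2} to get $|f(x)-P(x)|\le\ve\,\vb(x)^{-1}$, then integrate against $\n_0$ and use the finiteness of $\vb^{-1}\n_0$ to bound the $L_1$-norm by $\ve$ times a fixed constant. The paper additionally reduces to compactly supported $f$ and mentions the pushforward measure on the quotient $X_{\Ab}$, but neither step is essential and your direct one-line estimate is exactly the argument the paper carries out.
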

\begin{proof}Again we can suppose that $f$ has compact support. Consider the measure $\widetilde{\n_0}$ on $X_{\Ab}$ generated by the projection $\bbp$. By Theorem \ref{Th.SW2}, for any $\ve>0$ we can find a polynomial $p(w), \ w\in \R^N,$ such that $|\vb(x)(p(\ab_1(x), \dots, \ab_N(x))-f(x))|<\ve$ for all $x\in X$. Then for the $L_1$ - norm we have the estimate
\begin{gather*}
    \|f-p(\ab_1,\dots,\ab_N)\|_{L_1(\n_0)}=\int\limits_{X}|f-p(\ab_1,\dots,\ab_N)|d\n_0=\\
    \int\limits_{X}\left[|f-p(\ab_1,\dots,\ab_N)|\vb\right] \vb^{-1}d\n_0 \int\limits_{X}<\ve\int\limits_{X} \vb^{-1}d\n_0.
\end{gather*}
The latter expression tends to zero as $\ve\to 0$. \end{proof}
\section{The finite rank theorem in the complex Bargmann space}

In this section we prove the theorem about finite rank Toeplitz operators in the space $\Bb=\Bb(\C^1)$, which extends D.Luecking's theorem  to non-compactly supported measures.

For a complex locally finite Borel measure $\n$ we denote by $|\n|$ the nonnegative measure $|\n|(E)=\var_E(\re \n)+\var_E(\im\n)$, where $\var_E$  denotes the variation of the measure over the Borel set $E$.

 \begin{theorem}\label{Th.noncompact.D1}Let $\rb$ be a positive integer and let   $\n$ be a complex Borel measure on $\C^1$ such that

 \begin{equation}\label{Measure Lu}
 \textrm{the nonnegative measure } e^{\g |z|^\rb}|\n| \textrm{ is finite for some } \g>0.
 \end{equation}

 Suppose that the Toeplitz operator $\Tb_\n$ in $\Bb(\C^1)$, with the measure $\n$ as symbol, has rank less than $\rb$. Then the measure $\n$ is a sum of less  than $\rb$ point masses.\end{theorem}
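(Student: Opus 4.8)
The strategy is to mimic Luecking's original argument, where the finite-rank hypothesis is converted, via a determinant/Vandermonde-type manipulation, into an identity of the form $\int P(z,\bar z)\,d\n(z)=0$ for $P$ ranging over a suitable polynomial algebra, and then to invoke the approximation machinery of Section~\ref{SectionSW} in place of the classical Stone--Weierstrass theorem. First I would reduce the operator identity to a statement about the sesquilinear form: if $\rank \Tb_\n < \rb$, then for any $\rb$ analytic test functions $u_1,\dots,u_\rb$ and any $\rb$ test functions $v_1,\dots,v_\rb$, the $\rb\times\rb$ matrix $\bigl(\langle \Tb_\n u_k, \bar v_l\rangle\bigr)_{k,l}$ is singular, hence its determinant vanishes. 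Taking $u_k(z)=z^{m_k}$ and $v_l(z)=z^{n_l}$ (monomials, which lie in the domain by the discussion in Section~\ref{SectSetting}, since $e^{\g|z|^\rb}$-finiteness of $|\n|$ certainly places $\n$ in a class far better than $\Dc_{1,1}$) expands the determinant, by multilinearity, into a sum of integrals $\int z^{M}\bar z^{N} e^{-|z|^2}\,d\n(z)$ against an antisymmetrized product; the classical Luecking computation then shows this determinant identity is equivalent to
\begin{equation*}
\int_{\C^1} \Delta(w)\,\overline{\Delta(w)}\;d\n(z)=0,
\end{equation*}
after a change of variables, where $\Delta$ is a Vandermonde determinant in suitable exponentials, and more flexibly to $\int_{\C^1} G(z,\bar z)\,d\n(z)=0$ for every $G$ in the algebra generated by $\{1, e^{a z + b\bar z}\}$ or — after absorbing the Gaussian — by polynomials in $z,\bar z$ composed with the map $z\mapsto z^\rb$ and its conjugate, restricted to functions vanishing appropriately at infinity. (I would follow Luecking, \cite{Lue2}, closely here; the point of the present paper is precisely that the final approximation step no longer needs compact support.)

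The crucial new step is the application of Corollary~\ref{cor.SW}. Here I would take $X=\C^1$, the measure $\n_0 = |\n|$ (or $e^{-|z|^2}|\n|$, whichever is cleaner once the Gaussian bookkeeping is done), and the system $\Ab$ to consist of the constant $1$ together with $\ab_1(z) = z^\rb$ (and, in the complex reformulation, its real and imaginary parts, or equivalently $\ab_1$ and $\overline{\ab_1}$). The map $z\mapsto z^\rb$ is not injective on $\C^1$ — it identifies the $\rb$ preimages $\{ \zeta z : \zeta^\rb = 1\}$ — so one is exactly in the situation of Theorem~\ref{Th.SW2}: the functions $f$ that one can approximate are those subordinate to $\Ab$, i.e. invariant under the rotation group $\mu_\rb$ of $\rb$-th roots of unity. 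The weight must be chosen as $\vb(z) = e^{-c|z|^\rb}$ with $c<\g$, so that condition \eqref{CondNach}, $\vb(z)\le C\exp(-c'|z^\rb|)$, holds, and simultaneously condition \eqref{measure weight}, finiteness of $\vb^{-1}|\n| = e^{c|z|^\rb}|\n|$, holds — this is exactly guaranteed by hypothesis \eqref{Measure Lu} with room to spare. Corollary~\ref{cor.SW} then yields that the Luecking identity $\int G\,d\n = 0$, known for $G$ a polynomial in $z^\rb, \bar z^\rb$, extends to $\int f\,d\n = 0$ for every continuous, compactly supported, $\mu_\rb$-invariant $f$, and in particular (by averaging an arbitrary compactly supported continuous function over $\mu_\rb$) determines the rotation-average of $\n$.

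The final step is to upgrade this to the full structural conclusion. The vanishing of $\int f\,d\n$ for all compactly supported $\mu_\rb$-invariant $f$ forces the $\mu_\rb$-symmetrization of $\n$ to be zero, but the finite-rank hypothesis actually gives more: by running the determinant argument with translated and modulated test functions (exponentials $e^{az}$, which remain in the domain by the decay of $\n$), one obtains the vanishing identity not just for $\n$ but for every translate/modulate, so that Luecking's inductive extraction of point masses goes through verbatim — the vanishing of a $\rb\times\rb$ determinant of "moment" quantities is precisely the statement that $\n$ is supported on at most $\rb-1$ points. I expect the main obstacle to be \textbf{the bookkeeping in the determinant reduction}: carefully checking that all the test functions used stay in $\Dom \Tb_\n$ under only the decay hypothesis \eqref{Measure Lu} (rather than compact support), and that the change of variables turning the Gaussian-weighted monomial moments into the clean algebra of polynomials in $z^\rb,\bar z^\rb$ is compatible with the weight $\vb=e^{-c|z|^\rb}$ required by Corollary~\ref{cor.SW}. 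The topological and approximation-theoretic content, by contrast, is entirely packaged in Theorem~\ref{Th.SW2} and Corollary~\ref{cor.SW}, so once the algebraic identity is in hand the conclusion follows quickly.
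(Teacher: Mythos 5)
There is a genuine gap, and it sits exactly at the step you flagged as "bookkeeping": the algebra to which the vanishing identity applies is not the one you propose. Luecking's determinant reduction of the hypothesis $\rank \Tb_\n<\rb$ does \emph{not} yield $\int G(z,\bar z)\,d\n=0$ for $G$ in the algebra generated by $z^\rb$ and $\bar z^\rb$ on $X=\C^1$; that claim is false already for $\rb=2$ and $\n=\delta_0$ (a single point mass gives a rank-one operator, yet $\int 1\,d\n=1\ne 0$). What the reduction actually produces is an identity on $\C^{\rb}$: with $Z=(z_1,\dots,z_\rb)$ and the tensor-product measure $\n^{\otimes\rb}$, one gets $\int_{\C^\rb}p(Z)\overline{q(Z)}\,|W(Z)|^2\,d\n^{\otimes\rb}=0$ for all \emph{symmetric} analytic polynomials $p,q$, where $W$ is the Vandermonde determinant. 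Consequently the approximation machinery must be run on $X=\C^\rb$ with the measure $|W|^2|\n|^{\otimes\rb}$ and with $\Ab$ the elementary symmetric polynomials $\ab_1,\dots,\ab_\rb$ (and their conjugates). The non-separation of points handled by Theorem \ref{Th.SW2} is the identification of $Z$ with its coordinate permutations --- not the action of $\rb$-th roots of unity on $\C^1$ --- and the exponent $\rb$ in hypothesis \eqref{Measure Lu} enters because $|\ab_j(Z)|=O(|Z|^\rb)$, so the weight on $\C^\rb$ must decay like $\exp(-c|Z|^\rb)$, which is exactly what $e^{\g|z|^\rb}|\n|$ finite on each factor supplies.

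Your endgame also does not reach the stated conclusion: knowing that the $\mu_\rb$-average of $\n$ vanishes says nothing about $\n$ being a sum of fewer than $\rb$ point masses, and the proposed fix via translated/modulated test functions is not a proof. In the correct setup the conclusion falls out cleanly: once Corollary \ref{cor.SW} upgrades the polynomial identity to $\int f(Z)|W(Z)|^2\,d\n^{\otimes\rb}=0$ for all compactly supported continuous symmetric $f$, symmetrization removes the symmetry restriction, so $|W|^2\,d\n^{\otimes\rb}$ is the zero measure; hence $\supp\n^{\otimes\rb}$ lies in the zero set of $W$, which is impossible if $\supp\n$ contains $\rb$ or more points. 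Your instinct to package the new content into Theorem \ref{Th.SW2} and Corollary \ref{cor.SW} is right, but the space, the algebra, and the equivalence relation all have to be the $\C^\rb$/symmetric-polynomial ones for the argument to close.
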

\begin{proof}

Following \cite{Lue2}, we introduce the variable $Z=(z_1, \dots, z_{\rb})$ in $\C^{\rb}$ and the measure $\n^{\otimes \rb}$ on $\C^\rb$, the tensor product of $\rb$ copies of the measure $\n$. Suppose that the operator $\Tb_{\n}$ has rank less than $\rb$.
 As it is shown in \cite{Lue2} (see also the exposition in \cite{Le}, \cite{RozToepl},\cite{Choe}), this implies that
\begin{equation}\label{luec1}
    \int\limits_{\C^{\rb}}p(Z)\overline{q(Z)}|W(Z)|^2d\n^{\otimes \rb}=0
\end{equation}
for all symmetric analytical polynomials in the variable $Z$. Here $W(Z)$ is the Wandermonde determinant, $W(Z)=\prod_{j<k}(z_j-z_k)$.

It is known (see, e.g, \cite{Macdonald}) that in the algebra $S(\rb)$ of symmetric polynomials in $\rb$ variables the elementary symmetric polynomials $\ab_0=1, \ab_1=\sum{j} z_j,  \ab_1=\sum_{j_1< j_2}z_{j_1}z_{j_2}, \dots, \ab_k=\sum_{j_1<\dots<j_k}z_{j_1}\dots z_{j_k}, \dots$ form an algebra basis, which means that any symmetric polynomial is a polynomial of  variables $\ab_j$.  All these polynomials satisfy the estimate $|\ab_j(Z)|=O(|Z|^{\rb})$ as $|Z|\to\infty$. Therefore, by the condition \eqref{Measure Lu} of the Theorem, for the measure $\n_0=|\n^{\otimes\rb}|$ on $X=\C^{\rb}$ the requirement \eqref{measure weight} of Corollary \ref{cor.SW} is fulfilled. With, probably, a somewhat larger $\g$, this condition is fulfilled for the measure $|W(Z)|^2\n_0$, since $W(Z)$ only grows polynomially. The functions $\ab_j, \overline{\ab_j}$ do not separate points in $X=\C^{\rb}$: the points $Z_1, Z_2$ in $X$ are equivalent by the equivalence relation generated as in \ref{cor.SW} by this system of functions iff one of them  is obtained from the other one by a permutation of co-ordinates.

Now, by Corollary \ref{cor.SW}, we infer that the functions of the form $p(Z)\overline{q(Z)}$ with symmetric  polynomials $p,q$ are dense in the space of compactly supported continuous symmetric functions in $X$ in the sense $L_1(|W|^2|\n|^{\otimes\rb})$

We pass to the limit in \eqref{luec1} using this density statement to obtain

\begin{equation}\label{luec2}
    \int\limits_{\C^{\rb}}f(Z)|W(Z)|^2d\n^{\otimes \rb}=0
\end{equation}
 for all compactly supported continuous symmetric functions $f(Z)$

 This statement, but for compactly supported measures, had been derived from \eqref{luec1} in \cite{Lue2}. The remaining reasoning follows literally the one in \cite{Lue2}. We repeat it  in short. By symmetrizing \eqref{luec2} one obtains the same relation, but now with an arbitrary compactly supported  continuous function $f$. Therefore, the measure $|W(Z)|^2d\n^{\otimes \rb}$ must be the zero measure. This means that the support of $\n^{\otimes \rb}$ lies in the zero set of $W$, and this is impossible if this latter support contains at least $\rb$ points.
 \end{proof}

 The decay condition  of the measure imposed on the measure $\n$ in Theorem \ref{Th.noncompact.D1} depends on the rank of the Toeplitz operator $\Tb_\n$. It is easy to formulate a simple sufficient condition taking care of all finite rank cases.

 \begin{corollary}\label{CorNoncompD1}Let $\n$ be a locally finite complex measure on $\C^1$, such that the measure $\vb(z)^{-1}|\mu|$
 is finite, with some positive weight $\vb$ satisfying $\vb(z)=o(\exp(-|z|^N))$ for any $N$. Then the Toeplitz operator $\Tb_\n$ can have finite rank only if the measure $\n$ is a finite combination of point masses.
 \end{corollary}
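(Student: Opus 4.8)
The plan is to obtain Corollary~\ref{CorNoncompD1} as an immediate consequence of Theorem~\ref{Th.noncompact.D1}, the only work being to match the hypotheses. First I would assume that $\Tb_\n$ has finite rank, say $\rank\Tb_\n=k$, and set $\rb=k+1$; this is a positive integer and $\Tb_\n$ then has rank strictly less than $\rb$. To apply Theorem~\ref{Th.noncompact.D1} with this value of $\rb$ it remains only to check that the nonnegative measure $e^{\g|z|^{\rb}}|\n|$ is finite for some $\g>0$, i.e.\ that \eqref{Measure Lu} holds.

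For that I would take $\g=1$ and split $\C^1$ into a ball $\{|z|\le R\}$ and its complement. On the ball, $e^{|z|^{\rb}}\le e^{R^{\rb}}$ while $|\n|$ is finite there by local finiteness, so this region contributes a finite integral. On the complement I would invoke the standing hypothesis $\vb(z)=o(\exp(-|z|^{N}))$ with $N=\rb$: choosing $R$ so large that $e^{|z|^{\rb}}\vb(z)\le1$ for $|z|>R$ gives $e^{|z|^{\rb}}\le\vb(z)^{-1}$ there, whence
\[
\int_{\{|z|>R\}}e^{|z|^{\rb}}\,d|\n|\ \le\ \int_{\C^1}\vb(z)^{-1}\,d|\n|\ <\ \infty
\]
by the assumption that $\vb^{-1}|\n|$ is finite. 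Adding the two contributions establishes \eqref{Measure Lu}.

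Theorem~\ref{Th.noncompact.D1} now applies and yields that $\n$ is a sum of fewer than $\rb=k+1$ point masses, hence a finite combination of point masses, which is precisely the assertion of the corollary (the reverse direction, that such a measure produces a finite rank operator, is elementary and is not claimed). I do not anticipate any genuine obstacle here: the argument is essentially quantifier-chasing. The one point deserving a word of care is that $\vb$ is not assumed continuous, so the inequality $e^{|z|^{\rb}}\vb(z)\le1$ is available only outside a compact set; this is exactly why the region must be split, with the bounded part absorbed by local finiteness of $\n$ and the unbounded part by the weighted-finiteness hypothesis on $\vb^{-1}|\n|$.
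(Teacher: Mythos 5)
Your proposal is correct and is exactly the intended derivation: the paper states the corollary without proof as an easy consequence of Theorem~\ref{Th.noncompact.D1}, and your quantifier-chasing (set $\rb=k+1$, use the $o(\exp(-|z|^{N}))$ hypothesis with $N=\rb$ to verify \eqref{Measure Lu} via the split into a compact ball and its complement) is precisely the argument being left to the reader. No gaps.
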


 \section{Generalizations}

After the proof in \cite{Lue2} appeared, a number of generalizations of Luecking's finite rank theorem have been obtained, see \cite{AlexRoz},  \cite{Choe}, \cite{Le}, \cite{RozToepl}, \cite{RShir}.  Some  of them  do not use the compactness of the support of the measure $\n$ but rather build upon the theorem itself , and thus carry over to the noncompact case \emph{automatically} (of course, with  the condition of the type \eqref{Measure Lu}) imposed.

Here we just give the list of these results.
\begin{itemize}
\item The multi-dimensional extension of Luecking's theorem to the case of operators in $\Bb(\C^d)$, $d>1$ in \cite{Choe}.
    \item The extension  to the case of operators in the harmonic Bargmann space in \cite{AlexRoz} and to operators in $d$-harmonic Bargmann space in \cite{Choe} in $\R^d$.
        \item The extension to the case of operators in the Bargmann space of solutions of the Helmholtz equation in \cite{RozToepl}.
          \item  A generalization of the finite rank theorem to operators in the subspace in the Bargmann space $\Bb(\C^d)$, spanned by monomials $Z^\a$, with a certain 'sparse' but infinite set of monomials removed \cite{Le}, \cite{RozToepl}.
\end{itemize}
The alternative to \cite{Choe} proof of the multi-dimensional extension of Luecking's theorem in \cite{RShir} does not carry over \emph{directly} to non-compactly supported measures. This proof uses the induction on dimension and the relation of the finite rank property for the Toeplitz operator in the Bargmann space and this property for the operator with the same symbol, but acting  in the Bergman  space of analytical functions in a bounded domain containing the support of the symbol. This circumstance can be taken care of by a slight change in the proof.  In fact,  it is noticed \cite{RShir} that the finite rank property for the measure $\n$ implies the same property for the measure $\n_g=|g(Z)|^2\n$, where $g$ is a function analytical in the neighborhood of the support of the measure $\n$. This reasoning does not hold water for the measure with a noncompact support. However,  the proof can be modified a little, by considering not all analytical functions $g$ but only polynomials. The modified reasoning goes through, but we do not repeat here all details, especially, since the proof of this fact in \cite{Choe}, as mentioned before, holds without changes.

The only essential property that, by now, fails to be carried over to the noncompact case, is the finite rank theorem for Toeplitz operators with distributional symbols. The  existing proof of this property, (see \cite{AlexRoz}, and, in a modified form, in \cite{RozToepl}) uses the compactness of the support in a crucial way, and it is unclear at the moment, how this obstacle can be dealt with.

\end{document}